\def\bE{{\mathbb{E}}}
\def\bR{{\mathbb{R}}}
\def\cJ{{\mathcal{J}}}
\def\ba{\boldsymbol{\alpha}}
\def\bep{\boldsymbol{\epsilon}}
\def\mfu{\mathfrak{u}}
\def\Hep{{\mathrm{H}}}
\newcommand\mfk[1]{\mathfrak{#1}}
\def\ba{\boldsymbol{\alpha}}
\def\mfm{\mathfrak{m}}
\newtheorem{theorem}{Theorem}
\theoremstyle{plain}
\newtheorem{corollary}[theorem]{Corollary}
\newtheorem{proposition}[theorem]{Proposition}
\newcommand\bel[1]{\begin{equation}\label{#1}}
\newcommand\ee{\end{equation}}
\numberwithin{theorem}{section}
\numberwithin{equation}{section}
\begin{document}
\title[Stationary PAM]
{An Asymptotic Comparison of Two Time-Homogeneous PAM Models}
\author{Hyun-Jung Kim}
\curraddr[H.-J. Kim]{Department of Mathematics, USC\\
Los Angeles, CA 90089}
\email[H.-J. Kim]{hyunjungmath@gmail.com}
\urladdr{https://hyunjungkim.org/}
\author{Sergey Vladimir Lototsky}
\curraddr[S. V. Lototsky]{Department of Mathematics, USC\\
Los Angeles, CA 90089}
\email[S. V. Lototsky]{lototsky@math.usc.edu}
\urladdr{http://www-bcf.usc.edu/$\sim$lototsky}

\subjclass[2010]{Primary 60H15; Secondary 35R60, 60H40}

 \keywords{Stratonovich Integral, Wick-It\^{o}-Skorokhod Integral}

\begin{abstract}
Both Wick-It\^{o}-Skorokhod and Stratonovich
 interpretations of the Parabolic Anderson model (PAM) lead to solutions that
 are real analytic as functions of the noise intensity $\varepsilon$, and,
  in the limit $\varepsilon\to 0$, the difference between the
  two solutions is of order $\varepsilon^2$ and is non-random.
\end{abstract}

\maketitle


\section{Introduction}

Let $W=W(x),\ x\in [0,\pi]$ be a standard Brownian motion on a
complete probability space $(\Omega, \mathcal{F},\mathbb{P})$.
With no loss of generality, we assume that all realizations  of $W$
are in $\mathcal{C}^{1/2-}((0,\pi))$, that is,
H\"{o}lder continuous of every order less than $1/2$.

Consider the equations
\bel{eq:main-W}
\begin{split}
\frac{\partial u_{\diamond}(t,x;\varepsilon)}{\partial t} &=
\frac{\partial^2 u_{\diamond}(t,x;\varepsilon)}{\partial x^2} +
\varepsilon u_{\diamond}(t,x;\varepsilon)\diamond \dot{W}(x),\ t>0, \
0<x<\pi,\\
u_{\diamond}(t,0;\varepsilon)&=u_{\diamond}(t,\pi;\varepsilon)=0, \
u_{\diamond}(0,x;\varepsilon)=\varphi(x),
\end{split}
\ee
and
\bel{eq:main-S}
\begin{split}
\frac{\partial u_{\circ}(t,x;\varepsilon)}{\partial t} &=
\frac{\partial^2 u_{\circ}(t,x;\varepsilon)}{\partial x^2} +
\varepsilon u_{\circ}(t,x;\varepsilon)\circ \dot{W}(x),\ t>0, \
0<x<\pi,\\
u_{\circ}(t,0;\varepsilon)&=u_{\circ}(t,\pi;\varepsilon)=0, \
u_{\circ}(0,x;\varepsilon)=\varphi(x).
\end{split}
\ee
Equation \eqref{eq:main-W} is the Wick-It\^{o}-Skorokhod formulation of the
parabolic Anderson model with potential $\varepsilon\dot{W}$; equation \eqref{eq:main-S}
is the corresponding Stratonovich (or geometric rough path) formulation.
These equations, with $\varepsilon=1$,
 are studied in \cite{Kim-Lot17} and \cite{Kim-Lot-GRP},
respectively.

The objective of the paper is to show that
\begin{itemize}
\item The solutions of \eqref{eq:main-W} and
\eqref{eq:main-S} are real-analytic functions of
$\varepsilon$: with suitable functions $u_{\diamond}^{(n)}$,
and $u_{\circ}^{(n)}$, the equalities
\begin{align}
\label{ps-W}
u_{\diamond}(t,x;\varepsilon)&=u_{\diamond}(t,x;0)+
\sum_{n=1}^{\infty} \varepsilon^n u_{\diamond}^{(n)}(t,x)\\
\label{ps-S}
u_{\circ}(t,x;\varepsilon)&=u_{\circ}(t,x;0)+
\sum_{n=1}^{\infty} \varepsilon^n u_{\circ}^{(n)}(t,x)
\end{align}
hold for all $t>0$, $x\in [0,\pi]$, $\varepsilon>0$, and every
realization of $W$;
\item The first two terms in \eqref{ps-W} and \eqref{ps-S} are the
same so that
\bel{order2}
|u_{\diamond}(t,x;\varepsilon)-u_{\circ}(t,x;\varepsilon)|=O(\varepsilon^2),
 \ \varepsilon\to 0,
\ee
 for all $t>0$ and $x\in [0,\pi]$,  and every
realization of $W$.
\end{itemize}
Equalities \eqref{ps-W} and \eqref{ps-S} are in the spirit of \cite{Lot2002}.
Equality \eqref{order2} is similar to \cite[Proposition 4.1]{Wan-Wick12};
see also \cite{Wan-Wick10}.

The precise statement of the main result is in Section \ref{sec:MR}, and
the proof is in Sections \ref{sec:W}, \ref{sec:S}, and \ref{sec:Cr}.

\section{The Main Result}
\label{sec:MR}

Denote by $p=p(t,x,y)$ the heat semigroup on $[0,\pi]$ with
zero boundary conditions:
\bel{HS}
p(t,x,y)=\frac{2}{\pi}\sum_{k=1}^{\infty} e^{-k^2t}\sin(kx)\, \sin(ky),\
t>0,\ x,y\in [0,\pi].
\ee
Let $\varphi=\varphi(x)$ be a continuous function on $[0,\pi]$,
 and let $u=u(t,x)$ be the solution of the heat equation
\bel{u0}
\begin{split}
\frac{\partial u(t,x)}{\partial t} &=
\frac{\partial^2 u(t,x)}{\partial x^2},\ t>0, \
0<x<\pi,\\
u(t,0)&=u(t,\pi)=0, \
u(0,x)=\varphi(x),
\end{split}
\ee
that is,
\bel{u0-expl}
u(t,x)=\int_0^{\pi}p(t,x,y)\varphi(y)dy.
\ee
Next, define the function $\mfu=\mfu(t,x)$ by
\bel{u1-expl}
\mfu(t,x)=\int_0^t\int_0^{\pi} p(t-s,x,y)u(s,y)\,dW(y)\, ds.
\ee
That is, $\mfu$ is the mild solution of
\bel{u1}
\begin{split}
\frac{\partial \mfu(t,x)}{\partial t} &=
\frac{\partial^2 \mfu(t,x)}{\partial x^2}+u(t,x)\dot{W}(x),\ t>0, \
0<x<\pi,\\
\mfu(t,0)&=\mfu(t,\pi)=\mfu(0,x)=0.
\end{split}
\ee
Because $u$ is non-random, no stochastic integral is required to define $\mfu$.

\begin{proposition}
If $\varphi\in \mathcal{C}((0,\pi))$, then $\mfu$ is a continuous function of
$t$ and $x$ for all $t>0$ and $x\in [0,\pi]$.
\end{proposition}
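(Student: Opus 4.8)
The plan is to reduce the inner, pathwise spatial integral to an ordinary Lebesgue integral, bound the result by a function that is integrable in $s$ on $(0,t)$, and then extract joint continuity by dominated convergence. Fix $0<s<t$ and $x\in[0,\pi]$. The map $y\mapsto p(t-s,x,y)u(s,y)$ is $C^1$ on $[0,\pi]$ for $s>0$, and it vanishes at $y=0$ and $y=\pi$, since every term in \eqref{HS} carries the factor $\sin(ky)$. Integrating by parts, which is precisely the pathwise meaning of the integral against $dW$ when the integrand is smooth and nonrandom, gives
\[
g(s,t,x):=\int_0^{\pi} p(t-s,x,y)\,u(s,y)\,dW(y)
=-\int_0^{\pi} W(y)\,\partial_y\!\big[p(t-s,x,y)u(s,y)\big]\,dy ,
\]
with no boundary contribution. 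This expresses $g$ as an ordinary integral of the fixed continuous path $W$ against a smooth kernel.

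Next I would bound $g$. Writing $\partial_y[pu]=(\partial_y p)u+p\,\partial_y u$ and using the classical Gaussian bounds for the Dirichlet heat kernel \eqref{HS}, namely $\int_0^{\pi}|\partial_y p(r,x,y)|\,dy\le C r^{-1/2}$ and $\int_0^{\pi} p(r,x,y)\,dy\le1$ uniformly in $x$, together with the maximum principle $\|u(s,\cdot)\|_{\infty}\le\|\varphi\|_{\infty}$ and the smoothing estimate $\|\partial_y u(s,\cdot)\|_{\infty}\le C\|\varphi\|_{\infty}\,s^{-1/2}$ (itself a consequence of the same $L^1$ bound on $\partial_y p$), I obtain
\bel{e:gbound}
|g(s,t,x)|\le C\,\|W\|_{\infty}\,\|\varphi\|_{\infty}\big((t-s)^{-1/2}+s^{-1/2}\big),\qquad 0<s<t,\ x\in[0,\pi].
\ee
Since $\int_0^t\big((t-s)^{-1/2}+s^{-1/2}\big)\,ds=4\sqrt{t}<\infty$, the integral \eqref{u1-expl} converges absolutely, and in fact $|\mfu(t,x)|\le 4C\|W\|_{\infty}\|\varphi\|_{\infty}\sqrt{t}$.

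For continuity, fix $t_0>0$ and restrict to $t\in[t_0/2,2t_0]$, $x\in[0,\pi]$. The substitution $s=t\sigma$ fixes the domain of integration, giving $\mfu(t,x)=\int_0^1 t\,g(t\sigma,t,x)\,d\sigma$. For each fixed $\sigma\in(0,1)$ one has $r=t(1-\sigma)>0$ and $s=t\sigma>0$, so the $y$-integrand defining $g$ depends continuously on $(t,x)$ and is dominated in $y$ uniformly over the compact set; hence $t\,g(t\sigma,t,x)$ is jointly continuous in $(t,x)$, and in particular $\mfu(t,0)=\mfu(t,\pi)=0$ because $p$ vanishes there. By \eqref{e:gbound} the integrand is dominated, uniformly in $(t,x)$ over the chosen compact set, by $C'\big(\sigma^{-1/2}+(1-\sigma)^{-1/2}\big)\in L^1(0,1)$. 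The dominated convergence theorem then yields continuity of $\mfu$ at $(t_0,x_0)$, and since $t_0>0$ is arbitrary, $\mfu$ is jointly continuous on $(0,\infty)\times[0,\pi]$.

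The main obstacle is the double singularity of the integrand: as $s\uparrow t$ the kernel $p(t-s,x,\cdot)$ concentrates and, paired with the merely H\"older path $W$, the naive integrand is unbounded, while as $s\downarrow0$ the low regularity of $\varphi$ forces $\partial_y u(s,\cdot)$ to blow up. The crux is therefore the $L^1$ heat-kernel estimate $\int_0^{\pi}|\partial_y p(r,x,y)|\,dy\le Cr^{-1/2}$, which, after the integration by parts that trades a derivative of $W$ for a derivative of the kernel, makes the $s\uparrow t$ singularity of order $(t-s)^{-1/2}$ and hence integrable; matching it against the $s\downarrow0$ singularity $s^{-1/2}$ supplies the uniform dominating function needed to pass to the limit.
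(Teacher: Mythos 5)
Your proof is correct, but it takes a genuinely different route from the paper. The paper's argument is probabilistic: it views $\mathfrak{u}$ as a Gaussian random field, computes the second moment of an increment to get a bound of the form $\bE\big(\mathfrak{u}(t+\tau,x+h)-\mathfrak{u}(t,x)\big)^2\leq C(t)(\tau^2+h^2)^{1/4}\max_x|\varphi(x)|$, and invokes the Kolmogorov continuity criterion (citing the second-moment computations of Kim--Lototsky 2017). Your argument is entirely pathwise and deterministic: you integrate by parts in $y$ to rewrite the inner integral in \eqref{u1-expl} as an ordinary Lebesgue integral of the fixed path $W$ against $\partial_y\big[p(t-s,x,y)u(s,y)\big]$ (legitimate, since the integrand is smooth for $0<s<t$ and vanishes at the endpoints by \eqref{HS}), then use the $L^1$ heat-kernel bound $\int_0^\pi|\partial_y p(r,x,y)|\,dy\le Cr^{-1/2}$ to dominate the $s$-integrand by the integrable function $(t-s)^{-1/2}+s^{-1/2}$, and conclude by dominated convergence after the rescaling $s=t\sigma$. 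What each buys: the paper's route yields quantitative H\"older regularity of a continuous modification, but only almost surely and only using the Gaussian structure of $W$; your route yields continuity (without a H\"older exponent) for \emph{every} continuous realization of $W$ --- indeed for any bounded measurable path --- which is more in the spirit of the paper's stated goal of statements holding ``for every realization of $W$,'' and it avoids the Kolmogorov criterion and the moment computations entirely. Both arguments are sound; minor points in yours worth making explicit are that the constant in the kernel bound is taken uniform only over bounded time intervals (which your localization to $t\in[t_0/2,2t_0]$ handles) and that smoothness of $y\mapsto p(t-s,x,y)u(s,y)$ uses both $s>0$ and $t-s>0$.
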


\begin{proof}
This follows by the Kolmogorov continuity criterion: $\mfu$ is a Gaussian
random field and direct computations show
$$
\bE \big(\mfu(t+\tau,x+h)-\mfu(t,x)\big)^2\leq
C(t)\big(\tau^2+h^2\big)^{1/4}\,
\max_{x\in [0,\pi]}|\varphi(x)|;
$$
cf. \cite[Sections 6 and 7]{Kim-Lot17}.
\end{proof}

Next, define the functions $u_{\diamond}^{(n)}=u_{\diamond}^{(n)}(t,x)$,
$n=0,1,2\ldots$, $t\geq 0$, $x\in [0,\pi]$, by $u_{\diamond}^{(0)}(t,x)=
u(t,x)$, and, for $n\geq 1$, $u_{\diamond}^{(n)}$ is the mild
solution of
\bel{eq:derivative-W}
\begin{split}
\frac{\partial {u}^{(n)}_{\diamond}(t,x)}{\partial t} &=
\frac{\partial^2 {u}^{(n)}_{\diamond}(t,x)}{\partial x^2} +
 {u}^{(n-1)}_{\diamond}(t,x)\diamond \dot{W}(x),\ t>0, \ 0<x<\pi,\\
{u}^{(n)}_{\diamond}(t,0)&={u}^{(n)}_{\diamond}(t,\pi)
= {u}^{(n)}_{\diamond}(0,x)=0.
\end{split}
\ee
In other words,
\bel{eq:derivative-W-expl}
u_{\diamond}^{(n)}(t,x)=\int_0^t \int_0^{\pi} p(t-s,x,y)
u_{\diamond}^{(n-1)}(s,y)\diamond d{W}(y) \,  ds,\ \ n\geq 1,
\ee
and, in particular, $u_{\diamond}^{(1)}=\mfu$.

Similarly, define the functions $u_{\circ}^{(n)}=u_{\circ}^{(n)}(t,x)$,
$n=0,1,2\ldots$, $t\geq 0$, $x\in [0,\pi]$, by $u_{\circ}^{(0)}(t,x)=
u(t,x)$, and, for $n\geq 1$, $u_{\circ}^{(n)}$ is the mild
solution of
\bel{eq:derivative-S}
\begin{split}
\frac{\partial {u}^{(n)}_{\circ}(t,x)}{\partial t} &=
\frac{\partial^2 {u}^{(n)}_{\circ}(t,x)}{\partial x^2} +
 {u}^{(n-1)}_{\circ}(t,x)\circ \dot{W}(x),\ t>0, \ 0<x<\pi,\\
{u}^{(n)}_{\circ}(t,0)&={u}^{(n)}_{\circ}(t,\pi)
= {u}^{(n)}_{\circ}(0,x)=0.
\end{split}
\ee
In other words,
\bel{eq:derivative-S-expl}
u_{\circ}^{(n)}(t,x)=\int_0^t \int_0^{\pi} p(t-s,x,y)
u_{\circ}^{(n-1)}(s,y)\circ d{W}(y) \,  ds,\ \ n\geq 1,
\ee
and, in particular, $u_{\circ}^{(1)}=\mfu$.

The main result of the paper can now be stated as follows.

\begin{theorem}
\label{th:main}
 Let $\varphi\in \mathcal{C}((0,\pi))$. Then
\begin{enumerate}
\item Equality \eqref{ps-W} holds with
$u_{\diamond}^{(n)}$ from \eqref{eq:derivative-W-expl}.
\item Equality \eqref{ps-S} holds with
$u_{\circ}^{(n)}$ from \eqref{eq:derivative-S-expl}.
\item Equality \eqref{order2} holds and
\bel{order2-d}
\lim_{\varepsilon\to 0}\varepsilon^{-2}
\Big(u_{\circ}(t,x;\varepsilon)-u_{\diamond}(t,x;\varepsilon)\Big)=
\int_0^{\pi} p^{(3)}(t,x,z)\varphi(z)dz,
\ee
where
$$
p^{(3)}(t,x,z)=\int_0^{\pi}\int_0^t\int_0^s
p(t-s,x,y)p(s-r,y,y)p(r,y,z)\,dr\,ds\,dy.
$$
\end{enumerate}
\end{theorem}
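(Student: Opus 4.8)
The plan is to obtain both expansions from the Duhamel (mild) form of the equations and then to compare them coefficient by coefficient. For \eqref{ps-W} I would start from
\[
u_{\diamond}(t,x;\varepsilon)=u(t,x)+\varepsilon\int_0^t\int_0^{\pi}p(t-s,x,y)\,u_{\diamond}(s,y;\varepsilon)\diamond dW(y)\,ds,
\]
insert the ansatz \eqref{ps-W}, and match powers of $\varepsilon$; linearity of the Wick integral reproduces exactly the recursion \eqref{eq:derivative-W-expl}, and the same computation with $\circ$ in place of $\diamond$ gives \eqref{eq:derivative-S-expl}. Thus parts (1) and (2) reduce to proving that the formal series converge, for every realization of $W$, to the genuine solutions of \eqref{eq:main-W} and \eqref{eq:main-S}.

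The analytic content of (1) and (2) is this convergence. Unfolding \eqref{eq:derivative-W-expl} presents $u_{\diamond}^{(n)}(t,x)$ as an $n$-fold integral over the time simplex $0<s_n<\cdots<s_1<t$ of a chain of heat kernels against $n$ copies of $dW$; for the Wick case this is a single multiple Wiener integral $I_n(f_n^{t,x})$. Using $\bE\,I_n(f_n)^2\le n!\,\|f_n\|^2$ together with the semigroup identity $\int_0^{\pi}p(a,x,y)p(b,z,y)\,dy=p(a+b,x,z)$ and the short-time bound $p(\tau,y,y)=O(\tau^{-1/2})$, the factor $n!$ is absorbed by the gain $t^{n}/n!$ coming from the simplex, leaving a bound of the form $\bE\,(u_{\diamond}^{(n)}(t,x))^2\le (C\sqrt{t})^{2n}/\Gamma(\tfrac{n}{2}+1)$. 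Gaussian hypercontractivity on the fixed $n$-th chaos controls all moments in terms of the $L^2$ norm, and a Kolmogorov argument as in the Proposition then yields an a.s.\ bound uniform on compact $(t,x)$-sets; the resulting majorant is summable and entire in $\varepsilon$, so \eqref{ps-W} holds pathwise and defines a real-analytic function of $\varepsilon$. For (2) the only new feature is that each Stratonovich iterate expands, through repeated Stratonovich-to-Skorokhod corrections, into a finite sum of terms of chaos orders $n,n-2,\dots$, each carrying diagonal factors $p(\cdot,y,y)$; since these factors are time-integrable and the number of correction terms grows only combinatorially, the same majorization applies.

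For (3) the starting point is that $u_{\diamond}^{(0)}=u_{\circ}^{(0)}=u$ and, because $u$ is non-random, $u_{\diamond}^{(1)}=u_{\circ}^{(1)}=\mfu$; hence the $\varepsilon^0$ and $\varepsilon^1$ terms cancel in the difference and \eqref{order2} follows at once from (1) and (2). To identify the $\varepsilon^2$-coefficient I would subtract \eqref{eq:derivative-W-expl} from \eqref{eq:derivative-S-expl} at $n=2$, where the two iterates differ only through the inner integral, giving
\[
u_{\circ}^{(2)}(t,x)-u_{\diamond}^{(2)}(t,x)=\int_0^t\int_0^{\pi}p(t-s,x,y)\Big(\mfu(s,y)\circ dW(y)-\mfu(s,y)\diamond dW(y)\Big)ds.
\]
The Stratonovich--Skorokhod discrepancy is the trace of the Malliavin derivative of the integrand, and from \eqref{u1-expl} one has $D_z\mfu(s,y)=\int_0^s p(s-r,y,z)u(r,z)\,dr$, so setting $z=y$ produces a \emph{non-random} quantity. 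Substituting $u(r,y)=\int_0^{\pi}p(r,y,z)\varphi(z)\,dz$ from \eqref{u0-expl} and reordering the integrations yields precisely $\int_0^{\pi}p^{(3)}(t,x,z)\varphi(z)\,dz$, which is the claimed limit \eqref{order2-d} once the tail $\sum_{n\ge 3}\varepsilon^{n-2}(u_{\circ}^{(n)}-u_{\diamond}^{(n)})$ is shown to stay bounded as $\varepsilon\to0$, a fact supplied by the uniform majorants from (1) and (2).

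The hard part will be the rigorous treatment of the \emph{spatial} Stratonovich integral and its correction: unlike the temporal case there is no adaptedness to exploit, so I would need a Hu--Meyer/trace identity valid for the integrands $p(t-s,x,\cdot)\,\mfu(s,\cdot)$, together with a check that the singular diagonal $p(s-r,y,y)\sim(s-r)^{-1/2}$ is integrated against an honest $L^1$ measure in $(r,s,y)$, so that $p^{(3)}$ is finite and continuous. Pinning down the constant $1$ in front of the trace under the paper's conventions, and confirming that no lower-order contraction survives in $u_{\circ}^{(2)}-u_{\diamond}^{(2)}$, are the points where I expect to spend the most care; the convergence estimates behind (1) and (2), though lengthy, are routine once the simplex gain is in place.
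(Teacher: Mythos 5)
Your treatment of parts (1) and (3) is essentially the paper's. For (1) the paper also works with the chaos decomposition, writing $u_{\diamond}^{(n)}(t,x)=\sum_{|\boldsymbol{\alpha}|=n}u_{\boldsymbol{\alpha}}(t,x;1)\xi_{\boldsymbol{\alpha}}$ and quoting the bound $\big(\sum_{|\boldsymbol{\alpha}|=n}|u_{\boldsymbol{\alpha}}|^2\big)^{1/2}\le C^n(t)n^{-n/4}\sup|\varphi|^{1/2}$ from the earlier Wick paper; your multiple-Wiener-integral/simplex argument is a rederivation of the same estimate. For (3) the paper likewise reduces to $u_{\circ}^{(2)}-u_{\diamond}^{(2)}$ and uses the first-chaos identity $\xi_k\xi_k-\xi_k\diamond\xi_k=1$ (equivalently, the trace of the Malliavin derivative $D_z\mathfrak{u}(s,y)=\int_0^s p(s-r,y,z)u(r,z)\,dr$ at $z=y$), arriving at $p^{(3)}$; the integrability check you flag is done there via H\"older continuity of $z\mapsto\int_0^s p(s-r,y,z)u(r,z)\,dr$ of order $\delta>1/2$ and Bernstein's theorem on absolute convergence of Fourier series, with the diagonal singularity controlled by $\int_0^s\big(r(s-r)\big)^{-(1+\delta)/2}dr<\infty$.

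The genuine gap is in part (2), which is also where you and the paper diverge. You propose to expand each Stratonovich iterate into Wiener chaoses via repeated Stratonovich-to-Skorokhod (Hu--Meyer) corrections and assert that ``the same majorization applies.'' This is not supplied and is not routine: the $n$-th iterate generates on the order of $(n-1)!!\sim\sqrt{n!}$ pairing terms, each inserting a singular diagonal factor, so the factorial gain from the time simplex is partially consumed by the combinatorics and the estimate must be redone from scratch; moreover, even if the series of iterated Stratonovich integrals converges, you must still identify its sum with the solution of \eqref{eq:main-S}, which is \emph{defined} as a Wong--Zakai limit of the smoothed equations \eqref{ghe-1-ep} --- your sketch never makes that connection. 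The paper avoids both problems by a purely pathwise, deterministic argument: it works with the mild formulation \eqref{Str-mild} in the scale $H^{\gamma}$, $\gamma\in(1/2,1)$, using that $W\in\mathcal{C}^{1/2-}$ is a pointwise multiplier in $H^{1-\gamma}$ together with the kernel bound \eqref{PosNorm} and a generalized Gronwall inequality (Proposition \ref{pr:BasicEstimate-Str}); it then proves that $\varepsilon\mapsto u_{\circ}(t,\cdot;\varepsilon)$ has exact Taylor expansions of every order (Proposition \ref{prop:St}) and that the coefficients decay factorially via iterated simplex integrals and Stirling's formula (Proposition \ref{prop:Sa}), which yields \eqref{ps-S} without ever touching the chaos structure of the Stratonovich solution. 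If you want to keep your chaos-based route for (2), you need to prove the Hu--Meyer convergence estimate and the identification with the Wong--Zakai limit; otherwise the pathwise Sobolev-multiplier argument is the shorter path.
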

The proof is carried out in the following three sections.

\section{The Wick-It\^{o}-Skorokhod Case}
\label{sec:W}

The objective of this section is the proof of \eqref{ps-W}.

The  solution of \eqref{eq:main-W} is defined as a chaos solution
(cf. \cite[Theorems 3.10]{LR-spn}). It
 is a continuous in $(t,x)$ function
 (cf. \cite[Sections 6 and 7]{Kim-Lot17})
 and has a  representation as a series
\bel{W-sol}
u_{\diamond}(t,x;\varepsilon)=
\sum_{\ba\in \cJ}u_{\ba}(t,x;\varepsilon)\xi_{\ba},
\ee
where
\begin{align*}
\cJ&=\left\{\ba=(\alpha_k,\, k\geq 1): \alpha_k\in \{0,1,2,\ldots\},\
\sum_{k}\alpha_k<\infty\right\}, \\
\xi_{\ba}& = \prod_{k}
 \left(
 \frac{\Hep_{\alpha_{k}}(\xi_{k})}{\sqrt{\alpha_{k}!}}\right),\
\Hep_{n}(x) = (-1)^{n} e^{x^{2}/2}\frac{d^{n}}{dx^{n}}%
e^{-x^{2}/2},\\
\xi_k&=\int_0^{\pi}\mfk{m}_k(x)\,dW(x),\
\mfk{m}_k(x)=\sqrt{2/\pi}\,\sin(kx),
\end{align*}
and, with $|\ba|=\sum_k\alpha_k$, $|(\boldsymbol{0})|=0$,
$ |\bep(k)|=1$,
\begin{align}
\notag
u_{(\boldsymbol{0})}(t,x;\varepsilon)&=u(t,x),\\
\notag
u_{\ba}(t,x;\varepsilon)&=
\varepsilon \sum_{k} \sqrt{\alpha_k}\,
\int\limits_0^t\int\limits_0^{\pi} p(t-s,x,y)
u_{\ba-\bep(k)}(s,y;\varepsilon)\mfk{m}_k(y)\,dyds;
\end{align}
see \cite[Section 3]{Kim-Lot17} for details.
In particular,
\bel{uba-ep}
\sum_{|\ba|=n}u_{\ba}(t,x;1)\xi_{\ba}=
\sum_{|\ba|=n-1}\int_0^t\int_0^{\pi}
p(t-s,x,y)u_{\ba}(s,y;1)\xi_{\ba}\diamond dW(y)\, ds.
\ee
 Comparing \eqref{eq:derivative-W-expl} with  \eqref{uba-ep}
 shows that
\bel{n-der-W}
u_{\diamond}^{(n)}(t,x)=\sum_{|\ba|=n}u_{\ba}(t,x;1)\xi_{\ba}.
\ee
In other words, \eqref{ps-W} is equivalent  to \eqref{W-sol}.

Next,
\begin{equation*}
\begin{split}
\bE|u_{\diamond}^{(n)}(t,x)|=&
\bE\left|\sum_{|\ba|=n}u_{\ba}(t,x;1)\xi_{\ba}\right|
\leq \left(\bE\left(\sum_{|\ba|=n}
u_{\ba}(t,x;1)\xi_{\ba}\right)^2\right)^{1/2}\\
&= \left(\sum_{|\ba|=n}|u_{\ba}(t,x;1)|^2\right)^{1/2}
\leq C^n(t) {n^{-n/4}}\, \sup_{x\in (0,\pi)}|\varphi(x)|^{1/2},
\end{split}
\end{equation*}
where the last inequality follows by \cite[Theorem 4.1]{Kim-Lot17}.
As a result,
$$
\sum_{n\geq 0}\varepsilon^n \bE \big|u_{\diamond}^{(n)}(t,x)\big|<\infty,
$$
that is,  the series converges absolutely with probability one for all
$t>0,\ x\in [0,\pi]$, and $\varepsilon \in \bR$.

{\em This concludes the proof of  \eqref{ps-W}.}

\section{The Stratonovich Case}
\label{sec:S}

The objective of this section is to prove  \eqref{ps-S}.
To simplify the presentation,  we use the following notations:
 \begin{align*}
\Lambda&=(-\boldsymbol{\Delta})^{1/2}, \ \ H^{\theta}=
\Lambda^{-\theta}\big(L_2((0,\pi)\big),\ \
\|\cdot\|_{\theta}=\|\Lambda^{\theta}\cdot\|_{L_2((0,\pi))},\ \
\theta\in \bR,\\
 \ p*g(t,s,x)&=\int_0^{\pi} p(t-s,x,y)g(s,y)\,dy,
\end{align*}
where $\boldsymbol{\Delta}$ is the Laplace operator on $(0,\pi)$ with
zero boundary conditions and  $p$ is the heat kernel \eqref{HS}.

By direct computation,
\bel{PosNorm}
\|p*g\|_{\gamma}(t,s)\leq C_{T,\theta}\,
(t-s)^{-\theta/2}\|g\|_{\gamma-\theta}(s),\ \theta>0,\ \
\gamma\in \bR,\ \ t\in (s,T];
\ee
cf. \cite[Lemma 7.3]{Kr_Lp1}.

Consider the equation
\bel{ghe-1}
\frac{\partial v(t,x)}{\partial t}=\frac{\partial^2 v(t,x)}{\partial x^2}+
v(t,x)\circ \dot{W}(x)+f(t,x)\circ \dot{W}(x),\ t>0,
\ee
which includes \eqref{eq:main-S} as a particular case. By definition,
a solution (classical, mild, generalized, etc.) of  \eqref{ghe-1} is a suitable
limit, as $\epsilon\to 0$,  of the corresponding solutions of
\bel{ghe-1-ep}
\frac{\partial v_{\epsilon}(t,x)}{\partial t}
=\frac{\partial^2 v_{\epsilon}(t,x)}{\partial x^2}+
v_{\epsilon}(t,x)V_{\epsilon}(x)+f(t,x)V_{\epsilon}(x) ,\ t>0,
\ee
where $V_{\epsilon}, \ \epsilon>0$ are smooth functions on $[0,\pi]$
such that
$$
\sup_{\epsilon}
\left\| \int V_{\epsilon}\right\|_{\mathcal{C}^{1/2}}<\infty,\
\lim_{\epsilon\to 0} \sup_{x\in [0,\pi]}
\left| \int_0^xV_{\epsilon}(y)dy-W(x)\right|=0.
$$

By \cite[Theorem 3.5]{Kim-Lot-GRP},
\begin{itemize}
\item The generalized solution of \eqref{ghe-1} is the
same as the generalized solution of the equation
\bel{Str-gen}
v_t=\Big(v_{x}+W(x)v+W(x)f\Big)_x-W(x)v_x-W(x)f_x;
\ee
the subscripts $t$ and $x$, as in $f_x$,  denote the corresponding
partial derivatives;
\item The mild solution of \eqref{ghe-1} is the solution of the
integral equation
 \bel{Str-mild}
 \begin{split}
v(t,x)&= \int_0^t p*\big((f+v)W\big)_x(t,s,x)\, ds-
\int_0^t p*\big((f+v)_xW\big)(t,s,x)\,
ds\\
&+\int_0^{\pi}p(t,x,y)\varphi(y)dy.
\end{split}
\ee
\end{itemize}

On the one hand,   mild and generalized  solutions of \eqref{ghe-1}
are the same: just use $\mfk{m}_k$ as the test functions. On the other
hand, different definitions of the solution lead to different regularity results.

 By standard parabolic regularity, if
$\varphi\in H^{0}$ and $f\in L_2\big((0,T);H^{\gamma}\big)$,
$\gamma\in (1/2,1]$,
 then there is a unique generalized solution of \eqref{Str-gen}
 in the normal triple $(H^1,H^0,H^{-1})$ and
\bel{Str-gen-reg}
v\in  L_2\big((0,T);H^{1}\big)\bigcap
\mathcal{C}\big((0,T);H^{0}\big)
\ee
for every realization of $W$; cf. \cite[Theorem 3.4.1]{LM}.
Note that we cannot claim
$
v\in \mathcal{C}\big((0,T);H^{\gamma}\big)
$
even if $\varphi\in H^{\gamma}$.
 In fact,
because $W\in \mathcal{C}^{1/2-}$ is a point-wise multiplier in
$H^{\gamma}$ for $\gamma\in (-1/2,1/2)$
\cite[Lemma 5.2]{Kr_Lp1}, an attempt to find
 a traditional  regularity result for equation
\eqref{Str-gen} in a normal triple $(H^{r+1}, H^r, H^{r-1})$
leads to an irreconcilable pair of restrictions on $r$:
to have $Wf\in L_2((0,T); H^{r})$ we need $r<1/2$, whereas
to have $Wf_x\in L_2((0,T); H^{r-1})$ we need $r-1>-1/2$ or
$r>1/2$.

Accordingly, to derive a bound on $\|v\|_{\gamma}(t)$ for $t>0$,
 we use the mild formulation \eqref{Str-mild}.

\begin{proposition}
\label{pr:BasicEstimate-Str}
Let $\gamma\in (1/2,1)$, $f\in L_2\big((0,T); H^{\gamma}\big)$,
$\varphi\in H^{0}$,  and
let $v$ be the mild solution of \eqref{ghe-1} with
$v|_{t=0}=\varphi$.
Then, for every $T>0$ and every realization of $W$, there exists
a number $C_{\circ}$ such that
\bel{eq:stab0}
\|v\|_{\gamma}(t) \leq C_{\circ}\left(t^{-\gamma/2}\|\varphi\|_{0}+
\int_0^t (t-s)^{-\gamma}\|f\|_{\gamma}(s)\, ds\right).
\ee

\end{proposition}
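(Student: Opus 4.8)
The plan is to start from the mild formulation \eqref{Str-mild}, take the $H^{\gamma}$ norm of both sides, and split the right-hand side by the triangle inequality into three pieces: the free term $\int_0^{\pi}p(t,x,y)\varphi(y)\,dy$ and the two time-convolution integrals carrying $\big((f+v)W\big)_x$ and $\big((f+v)_xW\big)$. The free term is $p*\varphi(t,0,x)$ with the time-independent datum $\varphi$, so \eqref{PosNorm} applied with $s=0$ and $\theta=\gamma$ gives $\big\|\int_0^{\pi}p(t,\cdot,y)\varphi(y)\,dy\big\|_{\gamma}\le C\,t^{-\gamma/2}\|\varphi\|_{0}$, which is precisely the first term on the right of \eqref{eq:stab0}. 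All the difficulty is therefore concentrated in the two convolution terms, each of which still contains the unknown $v$; accordingly, what I derive is an integral inequality for $\psi(t):=\|v\|_{\gamma}(t)$, not yet a closed bound.

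To estimate the convolution terms I would use \eqref{PosNorm} together with two facts on the scale $H^{\theta}$: that differentiation in $x$ is bounded from $H^{\sigma}$ into $H^{\sigma-1}$, and that $W\in\mathcal{C}^{1/2-}$ is a pointwise multiplier on $H^{\sigma}$ exactly for $\sigma\in(-1/2,1/2)$ (\cite[Lemma 5.2]{Kr_Lp1}). For each term one must choose a smoothing exponent that simultaneously (a) places the multiplication by $W$ at an index strictly inside $(-1/2,1/2)$, (b) keeps the resulting time kernel integrable, and (c) leaves a kernel no worse than $(t-s)^{-\gamma}$. For the term with $\big((f+v)W\big)_x$, I would apply \eqref{PosNorm} with $\theta=2\gamma$ and move the derivative off at the cost of one index, leaving $\|(f+v)W\|_{1-\gamma}$ at the index $1-\gamma\in(0,1/2)$, where the multiplier bound yields $\le C_{W}\big(\|f\|_{\gamma}+\|v\|_{\gamma}\big)$ since $1-\gamma\le\gamma$; the time kernel is exactly $(t-s)^{-\gamma}$. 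For the term with $\big((f+v)_xW\big)$, applying \eqref{PosNorm} with $\theta=1$ puts the multiplication at the index $\gamma-1\in(-1/2,0)$ and, after absorbing the derivative, again produces $\le C_W(\|f\|_\gamma+\|v\|_\gamma)$, now with the milder kernel $(t-s)^{-1/2}\le C_{T}(t-s)^{-\gamma}$ on $(0,T]$ because $1/2<\gamma$. Summing the three pieces gives, for every realization of $W$,
\[
\psi(t)\le C\,t^{-\gamma/2}\|\varphi\|_{0}
+C\int_0^t(t-s)^{-\gamma}\|f\|_{\gamma}(s)\,ds
+C\int_0^t(t-s)^{-\gamma}\psi(s)\,ds .
\]

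It then remains to remove the self-referential last term. The parabolic regularity \eqref{Str-gen-reg} already gives $v\in L_2\big((0,T);H^{1}\big)$, whence $\psi=\|v\|_{\gamma}\le\|v\|_{1}\in L_2(0,T)\subset L_1(0,T)$; this a priori local integrability is exactly what is needed to invoke a generalized (singular) Gronwall inequality of Henry type, the kernel $(t-s)^{-\gamma}$ being integrable because $\gamma<1$. The associated resolvent is a Mittag-Leffler kernel $R(u)=\sum_{n\ge1}c_n\,u^{n(1-\gamma)-1}$ whose $\Gamma$-factor coefficients make the series converge on $[0,T]$ and whose only singular contribution is the $n=1$ term, of order $u^{-\gamma}$. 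Convolving $R$ against the nonhomogeneous data and using $\int_0^t(t-s)^{-a}s^{-b}\,ds=B(1-a,1-b)\,t^{1-a-b}$ with $\gamma<1$ shows that $R$ convolved with $s^{-\gamma/2}$ is $\le C_{T}\,t^{-\gamma/2}$ and $R$ convolved with $(s-r)^{-\gamma}$ is $\le C_{T}(t-r)^{-\gamma}$, so the Gronwall output reproduces exactly the two-term form on the right of \eqref{eq:stab0}, with $C_{\circ}$ depending on $T$, $\gamma$, and the realization of $W$.

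The step I expect to be the main obstacle is the bookkeeping of the three competing constraints on the smoothing exponents: the multiplier property of $W$ lives only on the narrow window $(-1/2,1/2)$, and one must keep the time kernel both integrable and no worse than $(t-s)^{-\gamma}$ while absorbing one spatial derivative in each term. The choices $\theta=2\gamma$ and $\theta=1$ work precisely because $\gamma\in(1/2,1)$, and it is this interval restriction that renders all three requirements compatible; the singular Gronwall closure is then routine given the a priori $L_1$ control of $\psi$.
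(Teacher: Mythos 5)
Your proposal is correct and follows essentially the same route as the paper: the same three-way splitting of the mild formulation \eqref{Str-mild}, the same choices $\theta=\gamma$, $\theta=2\gamma$, and $\theta=1$ in \eqref{PosNorm}, the same use of the multiplier property of $W$ at the indices $1-\gamma$ and $\gamma-1$, and the same closure by a singular Gronwall inequality (the paper cites \cite[Corollary 2]{GG} where you spell out the Henry-type resolvent). The only cosmetic difference is that the paper first isolates the linear estimate \eqref{eq:stab1} for the equation without the $v\circ\dot W$ term and then substitutes $f+v$, and it secures the a priori integrability of $\|v\|_\gamma$ by assuming WLOG that $\varphi$ and $f$ are smooth, whereas you invoke \eqref{Str-gen-reg}.
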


\begin{proof}
Throughout the proof, $C$ denotes a number depending
on $\gamma$, $T$, and the
 norm of $W$ in the space $\mathcal{C}^{1-\gamma}$.
 The value of $C$ can change from one instance to another.
With no loss of generality, we assume that $\varphi$ and
$f$ are smooth functions with compact support.

To begin, let us show that if $V$ is the mild solution of
$$
\frac{\partial V(t,x)}{\partial t}=\frac{\partial^2 V(t,x)}{\partial x^2}+
f(t,x)\circ \dot{W}(x),\ t>0,
$$
$V(t,0)=V(t,\pi)=0$, $V|_{t=0}=\varphi$,
then
\bel{eq:stab1}
\|V\|_{\gamma}(t) \leq Ct^{-\gamma/2}\|\varphi\|_{0}+C
\int_0^t (t-s)^{-\gamma}\|f\|_{\gamma}(s)\, ds.
\ee
Indeed, by \eqref{Str-mild},
$$
V(t,x)= \int_0^t p*(fW)_x(t,s,x)\, ds-
\int_0^t p*\big(f_xW\big)(t,s,x)\, ds+\int_0^{\pi}p(t,x,y)\varphi(y)dy.
$$
Using  \eqref{PosNorm}  with $\theta=\gamma$,
$$
\left\|\int_0^{\pi}p(t,\cdot,y)\varphi(y)dy\right\|_{\gamma}\leq Ct^{-\gamma/2}\|\varphi\|_0.
$$
Then
\bel{str-sol-f}
\|V\|_{\gamma}(t)\leq \int_0^t \|p*(fW)_x\|_{\gamma}(t,s)\, ds+
\int_0^t \|p*\big(f_xW\big)\|_{\gamma}(t,s)\, ds
+Ct^{-\gamma/2}\|\varphi\|_{0}.
\ee

To estimate the first term on the right hand side of \eqref{str-sol-f},
 we use  \eqref{PosNorm}  with $\theta=2\gamma$. Then
$$
\|p*(fW)_x\|_{\gamma}(t,s)\leq
C(t-s)^{-\gamma}\|(fW)_x\|_{-\gamma}(s)
\leq C(t-s)^{-\gamma}\|fW\|_{1-\gamma}(s),
$$
and, because $W\in \mathcal{C}^{1/2-}((0,\pi))$
 is a (point-wise) multiplier
in  $H^{1-\gamma}$,
$$
\|fW\|_{1-\gamma}(s)\leq C_W\|f\|_{1-\gamma}(s);
$$
 recall that $0<1-\gamma<1/2$.
Finally, as $1-\gamma<\gamma$,
\bel{str-est1-f}
\|p*(fW)_x\|_{\gamma}(t,s)\leq
C(t-s)^{-\gamma}\|f\|_{\gamma}(s).
\ee

To estimate the second term on the right hand side of \eqref{str-sol-f},
 we use  \eqref{PosNorm}  with $\theta=1$. Then
$$
\|p*\big(f_xW)\|_{\gamma}(t,s)
\leq \frac{C}{\sqrt{t-s}}\,\|f_xW\|_{\gamma-1}(s)
\leq \frac{C}{\sqrt{t-s}}\,\|f_x\|_{\gamma-1}(s),
$$
that is,
\bel{str-est2-f}
\|p*\big(f_xW\big)\|_{\gamma}(t,s)
\leq C (t-s)^{-1/2}\|f\|_{\gamma}(s).
\ee
To establish \eqref{eq:stab1}, we now combine \eqref{str-sol-f},
 \eqref{str-est1-f}, and \eqref{str-est2-f},
  keeping in mind that
 $(t-s)^{-1/2}\leq C(t-s)^{-\gamma}$ because $\gamma>1/2$.

Next,   \eqref{eq:stab1} applied to \eqref{ghe-1} implies
$$
\|v\|_{\gamma}(t)\leq C\int_0^t (t-s)^{-\gamma}
\|v\|_{\gamma}(s)\,ds+C\int_0^t (t-s)^{-\gamma}
\|f\|_{\gamma}(s)\,ds+Ct^{-\gamma/2}\|\varphi\|_{0},
$$
and then a generalization  of the Gronwall inequality
 (e.g. \cite[Corollary 2]{GG}) leads to \eqref{eq:stab0}.
\end{proof}

\begin{corollary}
If  $\varphi\in H^{0}$ and $\gamma\in (1/2,1)$, then, for
every $T>0$, $a>0$, and every realization of $W$, there
 exists a number $\tilde{C}_{\circ}$ such that the mild
solution of \eqref{eq:main-S} satisfies
\bel{norm-u}
\sup_{|\varepsilon|<a}\|u_{\circ}(t,\cdot,\varepsilon)\|_{\gamma}
\leq \tilde{C}_{\circ}\,t^{-\gamma/2}\|\varphi\|_0,\ t\in (0,T].
\ee
\end{corollary}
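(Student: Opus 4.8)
The plan is to recognize equation \eqref{eq:main-S} as the special case of \eqref{ghe-1} in which $f\equiv 0$ and the Brownian path $W$ is replaced by $\varepsilon W$. Indeed, the term $\varepsilon u_{\circ}\circ\dot W$ equals $u_{\circ}\circ(\varepsilon\dot W)$, and, because the transformed formulations \eqref{Str-gen} and \eqref{Str-mild} are linear in $W$, scaling the noise intensity by $\varepsilon$ amounts precisely to substituting $\varepsilon W$ for $W$ in the defining integral equation \eqref{Str-mild}. With this identification, Proposition \ref{pr:BasicEstimate-Str} applied with $f\equiv 0$ gives $\|u_{\circ}(t,\cdot;\varepsilon)\|_{\gamma}\leq C_{\circ}\,t^{-\gamma/2}\|\varphi\|_0$ at once, so the only real content of the corollary is that $C_{\circ}$ can be chosen uniformly over $|\varepsilon|<a$.

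To obtain this uniformity I would re-examine how $C_{\circ}$ depends on the path in the proof of Proposition \ref{pr:BasicEstimate-Str}. The only place $W$ enters is through the pointwise-multiplier estimates $\|fW\|_{1-\gamma}\leq C_W\|f\|_{1-\gamma}$ and its companion in $H^{\gamma-1}$, both linear in the multiplier; hence replacing $W$ by $\varepsilon W$ replaces the multiplier constant $C_W$ by $|\varepsilon|\,C_W$, consistent with $\|\varepsilon W\|_{\mathcal{C}^{1-\gamma}}=|\varepsilon|\,\|W\|_{\mathcal{C}^{1-\gamma}}$. Consequently the coefficient in the Gronwall-type inequality $\|v\|_{\gamma}(t)\leq C\int_0^t(t-s)^{-\gamma}\|v\|_{\gamma}(s)\,ds+(\text{source})$ takes the form $C_1|\varepsilon|$ with $C_1$ independent of $\varepsilon$. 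Crucially, the source term $C_0\,t^{-\gamma/2}\|\varphi\|_0$ arises from applying \eqref{PosNorm} to the heat-kernel part $\int_0^{\pi}p(t,\cdot,y)\varphi(y)\,dy$ and does not involve $W$, so its constant $C_0$ is $\varepsilon$-independent.

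The final step is to feed this structure into the generalized Gronwall inequality \cite[Corollary 2]{GG}. Its conclusion is a bound of the form $\|u_{\circ}(t,\cdot;\varepsilon)\|_{\gamma}\leq C_0\,t^{-\gamma/2}\|\varphi\|_0\,G(C_1|\varepsilon|)$, where $G$ is a nondecreasing, Mittag-Leffler-type function produced by the singular kernel $(t-s)^{-\gamma}$. Since $G$ is nondecreasing and $|\varepsilon|<a$, one has $G(C_1|\varepsilon|)\leq G(C_1 a)$, and it suffices to set $\tilde C_{\circ}=C_0\,G(C_1 a)$; for each realization this number is finite and depends only on $\gamma$, $T$, $a$, and $\|W\|_{\mathcal{C}^{1-\gamma}}$, the latter being finite because $1-\gamma<1/2$.

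The main obstacle I anticipate is bookkeeping rather than any new estimate: one must verify that only the noise-carrying coefficient scales linearly in $\varepsilon$ while the $\varphi$-source constant stays fixed, and confirm that the particular Gronwall lemma invoked yields a monotone dependence on the kernel coefficient. Once that monotonicity is secured, passing to the supremum over $|\varepsilon|<a$ is immediate.
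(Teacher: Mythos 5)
Your proposal is correct and matches the paper's intent: the corollary is stated without proof as an immediate consequence of Proposition \ref{pr:BasicEstimate-Str} with $f\equiv 0$ and $W$ replaced by $\varepsilon W$, and your filling-in of the details --- the source term from $\varphi$ being $\varepsilon$-independent, the noise coefficient scaling like $|\varepsilon|\,\|W\|_{\mathcal{C}^{1-\gamma}}\leq a\,\|W\|_{\mathcal{C}^{1-\gamma}}$, and the monotone (Mittag-Leffler) dependence of the Gronwall constant on that coefficient --- is exactly the argument the authors rely on. No gaps.
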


Next, define the functions $u_{\circ}^{(n),\varepsilon}=
u_{\circ}^{(n),\varepsilon}(t,x)$,\ $n=0,1,2\ldots,$
$t\geq 0$, $x\in [0,1]$, $\varepsilon\in \bR$,
by $u_{\circ}^{(0),\varepsilon}(t,x)=u_{\circ}(t,x;\varepsilon)$ and, for
$n\geq 1$,
\bel{eq:main-dk-ep}
\begin{split}
\frac{\partial u_{\circ}^{(n),\varepsilon}(t,x)}{\partial t} &=
\frac{\partial^2 u_{\circ}^{(n),\varepsilon}(t,x)}{\partial x^2} +
\varepsilon u_{\circ}^{(n),\varepsilon}(t,x)\circ \dot{W}(x)+
 u_{\circ}^{(n-1),\varepsilon}(t,x)\circ \dot{W}(x),\\
u_{\circ}^{(n),\varepsilon}(t,0)
&=u_{\circ}^{(n),\varepsilon}(t,\pi)=0, \
u_{\circ}^{(n),\varepsilon}(0,x)=0.
\end{split}
\ee
 In particular,
 $$
 u_{\circ}^{(n),0}(t,x)=u_{\circ}^{(n)}(t,x).
 $$
 Note that all equations in \eqref{eq:main-dk-ep}
  are of the form \eqref{ghe-1}.

\begin{proposition}
\label{prop:St}
If $\varphi\in H^{0}$, then, for every $\gamma\in (1/2,2/3)$
 and every realization of $W$,
\bel{gen-der}
\lim_{\varepsilon\to \varepsilon_0}
\frac{1}{(\varepsilon-\varepsilon_0)^{n}}
\Big\|u_{\circ}(t,\cdot;\varepsilon)-\sum_{k=0}^n
(\varepsilon-\varepsilon_0)^ku_{\circ}^{(k),\varepsilon_0}(t,\cdot)
\Big\|_{\gamma}=0,
\ee
$n\geq 0$, $\varepsilon_0\in \bR$,\ $t\geq 0$.
\end{proposition}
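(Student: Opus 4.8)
The plan is to prove \eqref{gen-der} for each fixed $n\geq 0$ by deriving the equation satisfied by the Taylor remainder and applying the stability estimate \eqref{eq:stab0}; the a priori control on the correction processes $u_{\circ}^{(n),\varepsilon_0}$ that this requires is obtained separately by induction on $n$. Fix $\varepsilon_0\in\bR$, $t>0$, and a realization of $W$, and for $n\geq 0$ put
$$
R_n=u_{\circ}(t,\cdot;\varepsilon)-S_n,\qquad
S_n=\sum_{k=0}^{n}(\varepsilon-\varepsilon_0)^{k}u_{\circ}^{(k),\varepsilon_0}(t,\cdot).
$$
The first step is to identify the equation solved by $R_n$. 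Subtracting from \eqref{eq:main-S} the equations satisfied by the terms of $S_n$ (namely \eqref{eq:main-S} at $\varepsilon_0$ for $k=0$ and \eqref{eq:main-dk-ep} for $1\leq k\leq n$) and using $u_{\circ}(\cdot;\varepsilon)=R_n+S_n$, the total noise coefficient recombines as $\varepsilon u_{\circ}(\cdot;\varepsilon)-\varepsilon_0 S_n-(\varepsilon-\varepsilon_0)S_{n-1}=\varepsilon R_n+(\varepsilon-\varepsilon_0)(S_n-S_{n-1})$; since $S_n-S_{n-1}=(\varepsilon-\varepsilon_0)^{n}u_{\circ}^{(n),\varepsilon_0}$, the correction telescopes to a single residual forcing and
$$
\frac{\partial R_n}{\partial t}=\frac{\partial^{2}R_n}{\partial x^{2}}+\varepsilon R_n\circ\dot{W}+(\varepsilon-\varepsilon_0)^{n+1}u_{\circ}^{(n),\varepsilon_0}\circ\dot{W},\qquad R_n|_{t=0}=0 .
$$
This is an equation of the form \eqref{ghe-1}. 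The one point requiring care here is that the collapse relies on the coefficient $\varepsilon u_{\circ}\circ\dot{W}$ in \eqref{eq:main-S} being \emph{linear} in $\varepsilon$: it is exactly this linearity that makes the homogeneous and forcing contributions of $S_n$ combine into the single telescoped term.

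Granting the remainder equation, I apply \eqref{eq:stab0} with $\varphi=0$ and forcing $f=(\varepsilon-\varepsilon_0)^{n+1}u_{\circ}^{(n),\varepsilon_0}$ to obtain
$$
\|R_n\|_{\gamma}(t)\leq C\,|\varepsilon-\varepsilon_0|^{n+1}\int_0^{t}(t-s)^{-\gamma}\|u_{\circ}^{(n),\varepsilon_0}\|_{\gamma}(s)\,ds ,
$$
where $C$ depends only on $\gamma$, $T$, and the $\mathcal{C}^{1-\gamma}$-norm of $W$, uniformly for $\varepsilon$ in a bounded neighborhood of $\varepsilon_0$. Dividing by $|\varepsilon-\varepsilon_0|^{n}$ yields $\|R_n\|_{\gamma}(t)/|\varepsilon-\varepsilon_0|^{n}\leq C\,|\varepsilon-\varepsilon_0|\,I_n(t)$ with $I_n(t)=\int_0^{t}(t-s)^{-\gamma}\|u_{\circ}^{(n),\varepsilon_0}\|_{\gamma}(s)\,ds$, so \eqref{gen-der} follows as soon as $I_n(t)<\infty$.

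The finiteness of $I_n$, together with the admissibility of the forcing term at every order, is the crux of the argument and is obtained from an a priori bound proved by a second induction on $n$. For $n=0$, the bound \eqref{norm-u} gives $\|u_{\circ}^{(0),\varepsilon_0}\|_{\gamma}(s)\leq \tilde C_{\circ}\,s^{-\gamma/2}$. For $n\geq1$, the process $u_{\circ}^{(n),\varepsilon_0}$ solves \eqref{ghe-1} with zero initial data and forcing $u_{\circ}^{(n-1),\varepsilon_0}$, so \eqref{eq:stab0} combined with the Beta integral $\int_0^{t}(t-s)^{-\gamma}s^{b}\,ds=C\,t^{1-\gamma+b}$ (valid for $b>-1$, $\gamma<1$) propagates $\|u_{\circ}^{(n),\varepsilon_0}\|_{\gamma}(s)\leq C_n\,s^{a_n}$ with $a_n=a_{n-1}+1-\gamma$, that is $a_n=n(1-\gamma)-\gamma/2$. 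Because $\gamma<1$ keeps $a_n>-1/2$, each $u_{\circ}^{(n-1),\varepsilon_0}$ is an admissible $L_2\big((0,T);H^{\gamma}\big)$ forcing so that Proposition \ref{pr:BasicEstimate-Str} applies at every stage and every $I_n(t)$ converges; the sharper restriction $\gamma<2/3$ is precisely what forces $a_1=1-\tfrac32\gamma>0$, so that each correction $u_{\circ}^{(n),\varepsilon_0}$ with $n\geq1$ belongs to $\mathcal{C}\big([0,T];H^{\gamma}\big)$ and vanishes at $t=0$.

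I expect this a priori estimate — verifying admissibility of the forcing at every order and checking that the constants remain uniform for $\varepsilon$ near $\varepsilon_0$ — to be the main obstacle; by contrast, the remainder identity of the first step is a routine, if slightly tedious, cancellation once the bookkeeping of the homogeneous and forcing contributions of $S_n$ is set up.
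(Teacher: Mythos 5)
Your proposal is correct and follows essentially the same route as the paper: derive the linear equation of the form \eqref{ghe-1} satisfied by the Taylor remainder, apply the stability estimate \eqref{eq:stab0}, and close the argument by induction using \eqref{norm-u}. The only (cosmetic) difference is that you telescope the noise term to a single forcing $(\varepsilon-\varepsilon_0)^{n+1}u_{\circ}^{(n),\varepsilon_0}$ with reaction coefficient $\varepsilon$, which obliges you to prove a separate a priori bound on the $u_{\circ}^{(n),\varepsilon_0}$ (in effect re-deriving the estimate of Proposition \ref{prop:Sa}), whereas the paper's normalization $\mathfrak{v}^{(n)}_{\varepsilon}$ yields a recursion with coefficient $\varepsilon_0$ and forcing $\mathfrak{v}^{(n-1)}_{\varepsilon}$ that inducts directly on the remainders.
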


\begin{proof}
Throughout the proof, $C$ denotes a number depending
on $\gamma$, $T$, $\varepsilon_0$, and the
 norm of $W$ in the space $\mathcal{C}^{1-\gamma}$.
Define
\bel{dffrence-gen}
\mathfrak{v}^{(n)}_{\varepsilon}(t,x)=
\frac{1}{(\varepsilon-\varepsilon_0)^{n}}
\left(u_{\circ}(t,x;\varepsilon)-\sum_{k=0}^n
(\varepsilon-\varepsilon_0)^ku_{\circ}^{(k),\varepsilon_0}(t,x)\right).
\ee
By \eqref{eq:main-S},
\bel{eq:main-Vd1}
\begin{split}
\sum_{k=0}^n(\varepsilon-\varepsilon_0)^k
\frac{\partial u_{\circ}^{(k),\varepsilon_0}(t,x)}{\partial t}
&= \sum_{k=0}^n
(\varepsilon-\varepsilon_0)^k
\frac{\partial^2 u_{\circ}^{(k),\varepsilon_0}(t,x)}{\partial x^2}\\
&+\varepsilon_0\sum_{k=0}^n(\varepsilon-\varepsilon_0)^k
u_{\circ}^{(k),\varepsilon_0}(t,x)\circ \dot{W}(x)\\
 &+ (\varepsilon-\varepsilon_0)\sum_{k=1}^n
 (\varepsilon-\varepsilon_0)^{k-1}
  u_{\circ}^{(k-1),\varepsilon_0}(t,x) \circ \dot{W}(x),
\end{split}
\ee
so that
\bel{eq:main-Vd2-gen}
\begin{split}
\frac{\partial \mathfrak{v}^{(0)}_{\varepsilon}(t,x)}{\partial t} &=
\frac{\partial^2 \mathfrak{v}^{(0)}_{\varepsilon}(t,x)}{\partial x^2}
+\varepsilon_0\mathfrak{v}^{(0)}_{\varepsilon}(t,x)
\circ \dot{W}(x)
 + (\varepsilon-\varepsilon_0) u_{\circ}(t,x;\varepsilon)\circ
 \dot{W}(x),\\
\frac{\partial \mathfrak{v}^{(n)}_{\varepsilon}(t,x)}{\partial t} &=
\frac{\partial^2 \mathfrak{v}^{(n)}_{\varepsilon}(t,x)}{\partial x^2}
+\varepsilon_0\mathfrak{v}^{(n)}_{\varepsilon}(t,x)\circ \dot{W}(x)
 +  \mathfrak{v}^{(n-1)}_{\varepsilon}(t,x)\circ \dot{W}(x),
 \ n\geq 1,
\end{split}
\ee
 $\mathfrak{v}^{(n)}_{\varepsilon}(0,x)=0$, $n\geq 0$, and
\eqref{gen-der} becomes
\bel{gen-der-v}
\lim_{\varepsilon\to\varepsilon_0}
\|\mathfrak{v}^{(n)}_{\varepsilon}\|_{\gamma}(t)=0,\
\ n\geq 0,\ t\geq 0,\ \varepsilon_0\in \bR.
\ee
 Note that all equations in
\eqref{eq:main-Vd2-gen} are of the form \eqref{ghe-1},
and \eqref{gen-der-v} trivially holds for $t=0$.
Accordingly, combining the second equation in \eqref{eq:main-Vd2-gen}
with \eqref{eq:stab0},
$$
\|\mathfrak{v}^{(n)}_{\varepsilon}\|_{\gamma}(t) \leq
C\int_0^t
(t-s)^{-\gamma}
\|\mathfrak{v}^{(n-1)}_{\varepsilon}\|_{\gamma}(s)\,ds,
$$
$n\geq 1$,
and then, for $t>0$, \eqref{gen-der-v} follows by induction: for  $n=0$,
 \eqref{norm-u} yields
\begin{align*}
&\|\mathfrak{v}^{(0)}_{\varepsilon}\|_{\gamma}(t)
\leq |\varepsilon-\varepsilon_0|\,
C\int_0^t (t-s)^{-\gamma}\|u_{\circ}(s,\cdot,\varepsilon)\|_{\gamma}\,ds
\\
& \leq  C  |\varepsilon-\varepsilon_0|\, \|\varphi\|_0
\int_0^t (t-s)^{-\gamma}s^{-\gamma/2}\,ds
\leq C  |\varepsilon-\varepsilon_0|\, \|\varphi\|_0 t^{1-(3/2)\gamma}
\to 0,\ \varepsilon \to \varepsilon_0;
\end{align*}
similarly,  for $n\geq 1$,
$$
\|\mathfrak{v}^{(n)}_{\varepsilon}\|_{\gamma}(t) \leq
C^{(n)}|\varepsilon-\varepsilon_0|\, \|\varphi\|_0,
$$
because $1-(3/2)\gamma>0$.
\end{proof}

\begin{proposition}
\label{prop:Sa}
If $\varphi\in H^{0}$ and  $\gamma\in (1/2,1)$, then
 \bel{n-der-S}
\lim_{n\to \infty} c^n\sup_{|\varepsilon|<a}
\|u_{\circ}^{(n),\varepsilon}\|_{\gamma}(t)=0,\ t\geq 0,
\ee
for all $c>0$, $a>0$, and every realization of $W$.
\end{proposition}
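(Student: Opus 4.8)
The plan is to reduce the claim to a single scalar recursion for the quantities $a_n(t):=\sup_{|\varepsilon|<a}\|u_{\circ}^{(n),\varepsilon}\|_{\gamma}(t)$ and then to iterate that recursion, using the super-exponential growth of the Gamma function to overcome the geometric weight $c^n$.

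First I would observe that each equation in \eqref{eq:main-dk-ep} is of the form \eqref{ghe-1} with zero initial data and forcing $f=u_{\circ}^{(n-1),\varepsilon}$; the only discrepancy with \eqref{ghe-1} is the coefficient $\varepsilon$ in front of the multiplicative term $v\circ\dot W$. Since the estimate \eqref{eq:stab0} is derived by moving the multiplicative term into the forcing and applying the Gronwall-type inequality, the identical argument applies with the coefficient $\varepsilon$, the only change being that the constant $C_{\circ}$ now also depends on $|\varepsilon|$ through the norm of $\varepsilon W$ in $\mathcal{C}^{1-\gamma}$; as $|\varepsilon|<a$, this dependence is bounded by $a\|W\|_{\mathcal{C}^{1-\gamma}}$, so $C_{\circ}$ can be taken uniform over $|\varepsilon|<a$ (exactly as in the Corollary giving \eqref{norm-u}), and I denote this uniform constant simply by $C$. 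Taking $\varphi=0$ in \eqref{eq:stab0} then yields
\[
a_n(t)\le C\int_0^t(t-s)^{-\gamma}\,a_{n-1}(s)\,ds,\qquad n\ge1,
\]
while \eqref{norm-u} supplies the base case $a_0(t)\le M\,t^{-\gamma/2}$ with $M:=\tilde C_{\circ}\|\varphi\|_0$.

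Next I would iterate this recursion. Writing $a_n(t)\le M_n\,t^{p_n}$ and using the Beta integral $\int_0^t(t-s)^{-\gamma}s^{b-1}\,ds=t^{b-\gamma}B(1-\gamma,b)$, a short induction gives $p_n=n(1-\gamma)-\gamma/2$ and
\[
M_n=M\,\bigl(C\,\Gamma(1-\gamma)\bigr)^n\,\frac{\Gamma(1-\gamma/2)}{\Gamma\bigl(n(1-\gamma)+1-\gamma/2\bigr)}.
\]
The key computation is that, with $b_n:=p_{n-1}+1$, one has $1-\gamma+b_n=b_{n+1}$, so the product $\prod_{k=1}^n B(1-\gamma,b_k)=\prod_{k=1}^n\Gamma(1-\gamma)\Gamma(b_k)/\Gamma(b_{k+1})$ telescopes and leaves only a single Gamma factor in the denominator. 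Consequently,
\[
c^n a_n(t)\le M\,\Gamma(1-\gamma/2)\,t^{-\gamma/2}\,
\frac{\bigl(c\,C\,\Gamma(1-\gamma)\,t^{1-\gamma}\bigr)^n}{\Gamma\bigl(n(1-\gamma)+1-\gamma/2\bigr)}.
\]

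Finally I would let $n\to\infty$. Because $\gamma<1$, the argument $n(1-\gamma)+1-\gamma/2$ grows linearly in $n$, so the denominator grows faster than any geometric sequence (this is the convergence of the associated Mittag--Leffler series), while the numerator is $A^n$ with $A:=c\,C\,\Gamma(1-\gamma)\,t^{1-\gamma}$ a fixed constant; hence the right-hand side tends to $0$ for every fixed $c,a,t>0$ and every realization of $W$. The case $t=0$ is immediate since $a_n(0)=0$ for $n\ge1$. I expect the only delicate points to be the uniformity of $C_{\circ}$ over $|\varepsilon|<a$ and the telescoping bookkeeping of the Gamma factors; the decay itself is then a soft consequence of the factorial growth of $\Gamma$ dominating $A^n$, which is precisely where the restriction $\gamma<1$ enters.
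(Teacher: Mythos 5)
Your proposal is correct and follows essentially the same route as the paper: apply the a priori bound \eqref{eq:stab0} with zero initial data to the recursion \eqref{eq:main-dk-ep}, use \eqref{norm-u} for the base case, iterate the resulting convolution inequality via the Beta integral so that the Gamma factors telescope, and conclude from the super-exponential growth of $\Gamma\big(n(1-\gamma)+\cdot\big)$. Your bookkeeping, giving $\Gamma\big(n(1-\gamma)+1-\gamma/2\big)$ in the denominator, is in fact the exact value of the telescoped product (the paper writes $\Gamma\big(n(1-\gamma)+1\big)$), and your explicit remark on the uniformity of the constant over $|\varepsilon|<a$ fills in a point the paper leaves implicit.
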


\begin{proof}
Throughout this proof,  $C$ denotes a number depending
on $\gamma$, $T$, $a$, and the
 norm of $W$ in the space $\mathcal{C}^{1-\gamma}$.

Combining  \eqref{eq:main-dk-ep} and \eqref{eq:stab0},
$$
\|u_{\circ}^{(n),\varepsilon}]\|_{\gamma}(t) \leq
C\int_0^t (t-s)^{r-1}\|u_{\circ}^{(n-1),\varepsilon}\|_{\gamma}(s)\,ds.
$$
 By iteration and \eqref{norm-u}, with $r=1-\gamma>0$,
\begin{equation*}
\begin{split}
\sup_{|\varepsilon|<a}
&\|u_{\circ}^{(n),\varepsilon}\|_{\gamma}(t)
\leq C^n\|\varphi\|_0\\
&\times\int_0^t\int_0^{s_{n-1}}\!\!\!\!\!\ldots\int_0^{s_2}
(t-s_n)^{r-1}(s_n-s_{n-1})^{r-1}
\cdots (s_2-s_1)^{r-1}s_1^{-\gamma/2}ds_1\cdots ds_n\\
&=C^n \|\varphi\|_0 \,
 \frac{\big(\Gamma(r)\big)^n\Gamma(1-(\gamma/2))}
 {\Gamma\big(nr+1\big)} \,t^{nr-(\gamma/2)},
\end{split}
\end{equation*}
 where $\Gamma$  is the Gamma function
$$
\Gamma(y)=\int_0^{\infty} t^{y-1}e^{-t}dt.
$$
Then \eqref{n-der-S} follows by the Stirling formula.
\end{proof}

Equality  \eqref{ps-S} now follows:
\begin{itemize}
\item By the Sobolev embedding theorem, every element,
 or equivalence, class from $H^{\gamma}$, $\gamma>1/2$,
  has a representative
 that is a continuous function on $[0,\pi]$;
\item By Proposition \ref{prop:St} and the Taylor formula,
$$
u_{\circ}(t,x)=u(t,x)+\sum_{k=1}^n
u_{\circ}^{(k),0}(t,x)\varepsilon^k+R_n(t,x);
$$
\item By Proposition \ref{prop:Sa},
$$
\lim_{n\to \infty} R_n(t,x)=0.
$$
\end{itemize}

{\em This concludes the proof of  \eqref{ps-S}.}

\section{The Correction Term}
\label{sec:Cr}

The objective of this section is the proof of \eqref{order2-d}.

Using \eqref{ps-W} and \eqref{ps-S}, and remembering that
$u_{\circ}^{(1)}=u_{\diamond}^{(1)}=\mathfrak{u}$,
\begin{equation}
\label{WS-diff0}
\begin{split}
\lim_{\varepsilon\to 0}\varepsilon^{-2}&
\Big(u_{\circ}(t,x;\varepsilon)-u_{\diamond}(t,x;\varepsilon)\Big)=
u_{\circ}^{(2)}(t,x)-u_{\diamond}^{(2)}(t,x)\\
&=
\int_0^t \int_0^{\pi} p(t-s,x,y)\big(\mathfrak{u}(s,y)\circ d{W}(y)-
\mathfrak{u}(s,y)\diamond d{W}(y)\big)\,ds.
\end{split}
\end{equation}
By definition,
$$
\xi_k\diamond\xi_n=
\begin{cases}
\xi_k\xi_n,& k\not=n,\\
\xi_n^2-1,& k=n,
\end{cases}
$$
and therefore
\bel{WDif}
\xi_k\xi_n-\xi_k\diamond\xi_n=
\begin{cases}
0& k\not=n,\\
1,& k=n,
\end{cases}
\ee
Then  \eqref{WDif} and \cite[Theorem 3.1.2]{Nualart} imply that,
for  a function $f=f(x)$ of the form
 $$
 f(x)=\sum_{k=1}^{\infty} f_k(x)\xi_k,
 $$
 with $f_k$ non-random and satisfying
 \bel{int-cond}
 \sum_k\int_0^{\pi}|f_k(x)|\,dx <\infty,
 \ee
  the following equality holds:
 \bel{WS-dif1}
 \int_0^{\pi} f(x)\circ dW(x)-\int_0^{\pi}
 f(x)\diamond dW(x)=
 \sum_{k=1}^{\infty} \left(\int_0^{\pi} f_k(x)\mathfrak{m}_k(x)\,
 dx\right).
 \ee
 Condition \eqref{int-cond} ensures that the sum on the
 right-hand side of \eqref{WS-dif1} converges absolutely.

 Next, recall that, by \eqref{u1-expl},
$$
\mathfrak{u}(s,y)=\sum_{k=1}^{\infty}
\left(\int_0^{\pi} \int_0^s p(s-r,y,z)u(r,z)\mathfrak{m}_k(z)\, dr\, dz
\right)\,\xi_k.
$$
For fixed $s\in [0,T]$ and $y\in[0,\pi]$, define
$$
g(z)=\int_0^s p(s-r,y,z)u(r,z)dr,\ g_k=\int_0^{\pi}g(z)\mfm_k(z)\,dz.
$$
Then
$$
\mathfrak{u}=\sum_{k=1}^{\infty} g_k\xi_k,
$$
and \eqref{int-cond} in this case will follow from uniform, 
in $(s,y)$ convergence of 
$$
\sum_{k=1}^{\infty} |g_k|,
$$
which, by Bernstein's theorem \cite[Theorem VI.3-1]{Zygmund},
will, in turn,  follow from 
\bel{HoldCont}
|g(z+h)-g(z)|\leq Ch^{\delta}
\ee
with $\delta\in (1/2,1)$ and $C$ independent of $s,y,z$. 

Recall that 
$$
u(r,z)=\sum_{k=1}^{\infty} \varphi_k e^{-k^2r} \, \mfm_k(z),\ 
\varphi_k=\int_0^{\pi} \varphi(x)\mfm_k(x)\, dx,
$$
and, by integral comparison, 
$$
\sum_{k=1}^{\infty} k^p e^{-k^2\,t}\leq \frac{C(p)}{t^{(1+p)/2}},\
p\geq 0.
$$
Also,
$$
|\sin(k(z+h))-\sin(kz)|\leq k^\delta h^{\delta},\ \delta\in (0,1),
$$
and the maximum principle implies $|u(r,z)|\leq C$.
Then
\begin{align*}
&p(s-r,y,z)\leq \frac{C}{\sqrt{s-r}},\ \
|p(s-r,y,z+h)-p(s-r,y,z)|\leq \frac{Ch^{\delta}}{(s-r)^{(1+\delta)/2}},\\
& |u(r,z+h)-u(r,z)|\leq \frac{Ch^{\delta}}{r^{(1+\delta)/2}},
\end{align*}
and \eqref{HoldCont} follows because
$$
\int_0^s\frac{dr}{\big(r(s-r)\big)^{(1+\delta)/2}}<\infty
$$
for $\delta\in (1/2,1)$.

We  now apply \eqref{WS-dif1} to \eqref{WS-diff0}:
 \begin{align*}
&\int_0^t \int_0^{\pi} p(t-s,x,y)\big(\mathfrak{u}(s,y)\circ d{W}(y)-
\mathfrak{u}(s,y)\diamond d{W}(y)\big)\,ds\\
&=\int_0^t \int_0^{\pi} \sum_{n=1}^{\infty}
\left(\int_0^{\pi} \!\left(\int_0^s p(s-r,y,z) {u}(r,z)\,dr\right)
\mathfrak{m}_n(z)\, dz\right)\mathfrak{m}_n(y)p(t-s,x,y)\, dy\,ds\\
&= \int_0^{\pi}\int_0^t\int_0^s p(t-s,x,y)p(s-r,y,y)u(r,y)\, dr\, ds\, dy,
\end{align*}
which, in view of \eqref{u0-expl} and the Fubini theorem, is the same as
\eqref{order2-d}. 

{\em This concludes the proof of  \eqref{order2-d}.}


\def\cprime{$'$}
\providecommand{\bysame}{\leavevmode\hbox to3em{\hrulefill}\thinspace}
\providecommand{\MR}{\relax\ifhmode\unskip\space\fi MR }
\providecommand{\MRhref}[2]{%
  \href{http://www.ams.org/mathscinet-getitem?mr=#1}{#2}
}
\providecommand{\href}[2]{#2}

\end{document}